\DeclareMathOperator{\theheight}{ht}
\newtheorem{thm}{Theorem}
\newtheorem{propo}{Proposition}
\begin{document}

\title{A statistic on the roots of a finite reflection group and a correspondence between the height function and the Bruhat order} 

\author{Mark Sterling \thanks{ECE Department, University of Rochester}}

\maketitle

Consider the symmetric group $S_{n}$ and the corresponding root system of type $A_{n-1}$.  To keep notation consistent call the Coxeter system $(W,S)$.  Denote the set of roots $\Phi$ and denote a simple system of roots $\Delta$.  Most of the basic information on reflection groups can be found in \cite[Chapter 1]{jeH90b}  

A defining property of finite reflection groups is that the root system is closed under the action of the group. This action is a permutation of the set of roots.  

\begin{equation}
\begin{split}
\alpha \to w \alpha \, , \, w \in W \\
\alpha \in \Phi \, , \, w \alpha \in \Phi
\end{split} 
\end{equation}

If we restrict our attention to just the set of roots then we are free to consider them as elements in the vector space of the geometric representation of $W$ or as elements of a set on which a group action is defined--it turns out that, for finite reflection groups, the action is transitive.  The G-set point of view furnishes a $n(n-1)$-dimensional permutation representation (the geometric representation is $n-1$ dimensional).  One finds that this permutation representation can be obtained as an induced representation of a particular parabolic subgroup $W_{J}$.  Call the simple roots $\alpha_{i}, i \in (1,n-1)$ with $s_{i}$ the simple reflections.  Let $J = S \setminus \left\{ s_{n-1}, s_{n-2} \right\} $, and call the permutation character of the roots $R$.  Denote the trivial representation by $1$.   

\begin{equation} 
R = \mathbf{1}\uparrow_{W_{J}}^{W}
\end{equation}   

\noindent That is, there is a bijective correspondence between the basis vectors of the carrier space of $\mathbf{1}\uparrow_{W_{J}}^{W}$ and the set of roots $\Phi$.  A simple calculation shows that $\frac{n!}{(n-2)!} = n(n-1)$.  Thus, the number of cosets of $W_{J}$ equals the number of roots of $W$, as would be expected.  Considering left or right cosets and the direction in which composition is defined, then the cosets of $W_{J}$ are determined, in one line notation, either by the location of $n$ and $n-1$ or by the two rightmost entries of the permutation.  We follow the notational convention for permutations found in \cite[Appendix A3, pp. 307--309]{aBfB05b}.  Permutations are written as a single line of numbers $(1,n)$ as in the following example, which illustrates how we will describe group elements $w \in W$.    

\begin{equation} 
\left\downarrow 
\left(
\begin{matrix}  
	1 & 2 & 3 & 4 & 5 & 6  \\
	6 & 2 & 4 & 3 & 1 & 5
\end{matrix}
\right) \right. = 624315
\end{equation}   

The bijection between the roots and the cosets of the parabolic subgroup can be defined more explicitly as follows.  Consider the standard construction of the root system in a Euclidean vector space \cite[p. 41]{jeH90b}.  In terms of the basis vectors the roots are $\varepsilon_{i} - \varepsilon_{j}$ and the simple roots are of the form $\alpha_{i} = \varepsilon_{i} - \varepsilon_{i+1}$.  We can represent the roots with a more compact notation.  That is $\alpha_{ij} \equiv \varepsilon_{i} - \varepsilon_{j}$.  We define elements $w \in W$ of the following form (in words, the first $n-2$ elements of the permutaton decrease and the last 2 elements are arbitrary, e.g. $653124$).  

\begin{equation} \label{nota} 
w = n(n-1) \cdots \hat{i} \cdots \hat{j} \cdots 1 i j
\end{equation}

Such an element will be denoted $w_{\alpha}$ if $\alpha = \alpha_{ij}$.  In terms of the Bruhat order, the element $w_{\alpha}$ is the unique maximal element of the coset $w_{\alpha} W_{J}$.  This is thus the promised bijection between the roots and the cosets of $W_{J}$.  

Any $\beta \in \Phi$ can be written uniquely $\beta = \sum_{\alpha \in \Delta} c_{\alpha} \alpha$ and the sum $\sum c_{\alpha}$ is called the \emph{height} of $\beta$, $\theheight ( \beta )$ \cite[p. 11]{jeH90b}.  A basic fact about such sums is that the coefficients $c_{\alpha}$ of a root will either be all positive or all negative.  The height function finds application in the representation theory of Lie algebras \cite[pp. 121--123]{jeH72b}  Further, it should be apparent that $\theheight(\alpha = \alpha_{ij}) = j - i$.        

We believe that an additional statistic on $\Phi$ is suggested by the representation described above.  For a root $\alpha$, a coset $w_{\alpha} W_{J}$ can be determined with the bijection described above.  The proposed statistic, which we write $n_{J}$, is then the length of $w_{\alpha}$, that is $n_{J} (\alpha) \equiv l(w_{\alpha})$.  Any element $w$ of a reflection group can be written as a product of simple reflections $w = t_{1} \cdots t_{r}$ where the $t_{i} \in S$.  The length function $l(w)$ is the smallest $r$ for which such a product exists.  A specific example in the symmetric group $S_{6}$, $n_{J} ( \alpha_{24} ) \equiv l ( 653124 ) = 11$.  

Why is this statistic important?  The statistic $n_{J}$, to a large extent, characterizes how the roots $\Phi$ inherit a partial ordering from the Bruhat order by way of the bijection outlined above.  It plays essentially the same role that the length function plays for the full group, and is helpful if we wish, for instance, to decompose $R$ by Kazhdan-Lusztig theory \cite[p. 183]{aBfB05b}.        

The statistic also hints at possible connections between the height function and the Bruhat order.  We present one such connection in the following  

\begin{thm} If $\theheight ( \alpha ) = \theheight ( \beta )$ then $w_{\alpha} W_{J}$ is comparable to $w_{\beta} W_{J}$ with respect to the Bruhat ordering on the cosets of $W_{J}$.   
\end{thm}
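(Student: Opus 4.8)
The plan is to prove the stronger assertion that $w_{\alpha}$ and $w_{\beta}$ are already comparable in the Bruhat order on $W$ itself; since $w_{\alpha}\in w_{\alpha}W_{J}$ and $w_{\beta}\in w_{\beta}W_{J}$, and since $xW_{J}\le yW_{J}$ holds in the induced order on $W/W_{J}$ whenever $x\le y$ in $W$ (Bj\"orner--Brenti, Ch.~2), this yields the theorem at once. So fix $\alpha=\alpha_{ij}$ and $\beta=\alpha_{kl}$ with $\theheight(\alpha)=\theheight(\beta)$; after interchanging $\alpha$ and $\beta$ if necessary we may assume $i\le k$. Here --- and essentially only here --- the height hypothesis enters: from $j-i=\theheight(\alpha)=\theheight(\beta)=l-k$ together with $i\le k$ we get $j\le l$, so the index pair $\{i,j\}$ is dominated componentwise by $\{k,l\}$ when both are listed in increasing order. (If $i=k$ then $\alpha=\beta$ and there is nothing to prove; nothing below depends on the sign of the roots, so negative roots are covered by the same discussion.) I then claim that $w_{\beta}\le w_{\alpha}$.

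To verify $w_{\beta}\le w_{\alpha}$ I would use the sorted-prefix criterion for the Bruhat order on $S_{n}$: for $u,v\in S_{n}$ one has $u\le v$ if and only if, for every $m$, the increasing rearrangement of $\{u(1),\dots,u(m)\}$ is dominated componentwise by that of $\{v(1),\dots,v(m)\}$ (Bj\"orner--Brenti, Thm.~2.1.5). Reading off the explicit one-line form~\eqref{nota}, the length-$m$ prefix set of $w_{\alpha_{pq}}$ is: the set of the $m$ \emph{largest} elements of $\{1,\dots,n\}\setminus\{p,q\}$ when $m\le n-2$ (the first $n-2$ entries list that complement in decreasing order); the set $\{1,\dots,n\}\setminus\{q\}$ when $m=n-1$ (that complement together with the value $p$ sitting in position $n-1$); and all of $\{1,\dots,n\}$ when $m=n$. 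The verification therefore collapses to one elementary fact: if a subset $B$ of $\{1,\dots,n\}$ is dominated componentwise by $A$ with $|A|=|B|$, then for each $m$ the increasing list of the $m$ largest elements of $\{1,\dots,n\}\setminus A$ is dominated componentwise by that of $\{1,\dots,n\}\setminus B$ --- deleting the ``larger'' set pushes the remaining large values down. Applying this with $(A,B)=(\{k,l\},\{i,j\})$ for $m\le n-2$, with $(A,B)=(\{l\},\{j\})$ for $m=n-1$, and noting equality for $m=n$, establishes all the required prefix inequalities; hence $w_{\beta}\le w_{\alpha}$, and the cosets are comparable. (The same computation gives slightly more: among roots of a fixed height the associated cosets form a chain, linearly ordered by the first index.)

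I do not expect a genuine obstacle; the argument is short and the work is almost entirely bookkeeping. The step needing the most care is the length-$(n-1)$ prefix: one must use that position $n-1$ of $w_{\alpha_{pq}}$ carries the value $p$, and observe that the resulting prefix set $\{1,\dots,n\}\setminus\{q\}$ is independent of the relative order of the last two entries --- this is precisely what makes the proof run uniformly for positive and negative roots. The only other point worth spelling out is the convention for the Bruhat order on $W/W_{J}$, so that ``$w_{\alpha}W_{J}$ is comparable to $w_{\beta}W_{J}$'' really does follow from ``$w_{\beta}\le w_{\alpha}$ in $W$'': with the standard definition via existence of comparable coset representatives this is immediate, and it agrees with the description through minimal representatives.
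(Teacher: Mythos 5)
Your proof is correct, but it takes a genuinely different route from the paper's. The paper argues inductively along a fixed height class: starting from the minimal coset it passes from $w_{\alpha_{ij}}$ to $w_{\alpha_{(i-1)(j-1)}}$ by an explicit pair of length-increasing transpositions (split into three cases according to $|i-j|$ and the sign of the root), thereby exhibiting the height class as a chain one covering step at a time; comparability of arbitrary cosets of equal height then follows by transitivity, and the transfer from maximal representatives to cosets is delegated, as in your argument, to Proposition 2.5.1 of Bj\"orner--Brenti. You instead compare the two extremes directly via Ehresmann's sorted-prefix criterion, using the explicit form \eqref{nota} to read off every prefix set of $w_{\alpha_{pq}}$ and reducing everything to the observation that complementation reverses componentwise domination of equal-size subsets; I checked this elementary fact and the $m=n-1$ case, and both hold, including for negative roots where $j<i$. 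What your approach buys is a self-contained, non-inductive verification that does not depend on checking that each intermediate permutation retains the form \eqref{nota} or that each transposition really raises length (points the paper leaves somewhat implicit, e.g.\ its claim that the procedure ``exhausts all possibilities''); what the paper's approach buys is finer information, namely an explicit saturated-looking chain of length-increasing steps between consecutive roots of the same height, whereas your argument only certifies comparability of the endpoints (though your closing remark that the height class is linearly ordered by the first index recovers the chain structure). One small presentational point: you should cite the complementation fact or prove it in a line (via the counting characterization of componentwise domination), since it is the entire content of the prefix verification.
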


\begin{proof}

Proof of the theorem proceeds by induction.  Take a minimal coset $w_{\beta} W_{J}$ for a fixed height $h$.  The maximal coset element of $w_{\beta} W_{J}$ is of the form 4.  If $w_{\beta} = n(n-1) \cdots ij$ then if there is a $w_{\gamma} = n(n-1) \cdots (i-1)(j-1)$ then $\theheight(\beta) = \theheight(\gamma)$ and we can get to $w_{\gamma}$ from $w_{\beta}$ by applying two transpositions which increase the length of the element.  This can be demonstrated separately in the following situations.  It is understood that we always perform transpositions so that we preserve the form \eqref{nota}.    

If $\left| i - j \right| = 1$ and $i > j$ then the sequence is 

\begin{equation} \nonumber
\begin{split}
w_{\beta} = n (n-1) \cdots 1 i j & \to \\ n (n-1) \cdots 1 i (j-1) & \to \\ n (n-1) \cdots 1 (j=(i-1)) (j-1) &  = w_{\gamma}
\end{split}
\end{equation}  

If $\left| i - j \right| = 1$ and $j > i$ then the sequence is 

\begin{equation} \nonumber
\begin{split}
w_{\beta} = n (n-1) \cdots 1 i j & \to \\ n (n-1) \cdots 1 j i & \to \\ n (n-1) \cdots 1 (i-1) (i=(j-1)) & = w_{\gamma}
\end{split}
\end{equation}

If $\left| i - j \right| > 1$ then the sequence is 

\begin{equation} \nonumber
\begin{split} 
w_{\beta} = n (n-1) \cdots 1 i j & \to \\ n (n-1) \cdots 1 (i-1)j & \to \\ n (n-1) \cdots 1 (i-1)(j-1) & = w_{\gamma}
\end{split}
\end{equation} 

Clearly, since there are a finite number of cosets, the $w_{\gamma} W_{J}$ obtained by the procedure above exhausts all possiblities.  Also Proposition 2.5.1 of \cite[p. 43]{aBfB05b} and Lemmas 6.3.6 and 6.3.7 show that, for our purposes, we can draw conclusions as we have done by interchanging between $w_{\alpha}$ and $w_{\alpha} W_{J}$.   

\qedhere

\end{proof}

An immediate observation is that incomparable roots, or roots with incomparable cosets, have different heights.  It is also true that roots with equal $n_{J}$ have different heights.  In future work, we hope to investigate how strongly these statements relate to the following proposition about Costas Arrays.  see \cite{jC84}

\begin{propo} 
For a Costas Array $w \in W$ and $\beta , \gamma \in \Phi$, if $\theheight ( \beta ) = \theheight ( \gamma )$ then $\theheight ( w \beta ) \neq \theheight ( w \gamma )$.
\end{propo}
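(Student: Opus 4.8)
The plan is to observe that, once the $W$-action is written out in coordinates, the proposition is essentially a restatement of the defining property of a Costas array. First I would record the explicit action of $W = S_{n}$ on roots. Since $w$ acts on the ambient space by $w\varepsilon_{i} = \varepsilon_{w(i)}$, we get $w\alpha_{ij} = \varepsilon_{w(i)} - \varepsilon_{w(j)} = \alpha_{w(i)w(j)}$, which lies in $\Phi$ because $\Phi$ is $W$-stable; combining this with $\theheight(\alpha_{pq}) = q - p$ gives
\begin{equation} \nonumber
\theheight(w\alpha_{ij}) = w(j) - w(i).
\end{equation}
So if $\beta = \alpha_{ij}$ and $\gamma = \alpha_{k\ell}$, the assertion to prove is: when $\beta \neq \gamma$ and $j - i = \ell - k$, one has $w(j) - w(i) \neq w(\ell) - w(k)$. (Note that $\beta \neq \gamma$ must be read as part of the hypothesis, since otherwise the claim is plainly false.)

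Next I would normalize signs. Set $h = \theheight(\beta) = \theheight(\gamma) = j - i = \ell - k$; since $i \neq j$ we have $h \neq 0$. Replacing $\beta$ and $\gamma$ simultaneously by $-\beta = \alpha_{ji}$ and $-\gamma = \alpha_{\ell k}$ changes neither the hypothesis (both heights flip sign) nor the conclusion (since $\theheight(w(-\delta)) = -\theheight(w\delta)$), so we may assume $h > 0$. Then $j = i + h$ and $\ell = k + h$ with $1 \le i, k \le n - h$, and because $\beta$ and $\gamma$ have the same height, $\beta \neq \gamma$ is equivalent to $i \neq k$.

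Finally I would invoke the Costas property. Regarding $w$ as the Costas permutation, the displacement vectors joining distinct pairs of its points are pairwise distinct; applied to the pairs $\{i, i+h\}$ and $\{k, k+h\}$ with $i \neq k$, the vectors $\bigl(h,\, w(i+h) - w(i)\bigr)$ and $\bigl(h,\, w(k+h) - w(k)\bigr)$ must differ, and since their first coordinates coincide, the second coordinates differ: $w(j) - w(i) \neq w(\ell) - w(k)$. By the first paragraph this is exactly $\theheight(w\beta) \neq \theheight(w\gamma)$, which is the claim.

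I do not expect a genuine obstacle here: the content lies entirely in setting up the dictionary between root heights and the difference triangle of the array. The only points that need care are that "$\theheight(\beta) = \theheight(\gamma)$ together with $\beta \neq \gamma$" forces the two index pairs to be distinct (so that the Costas condition actually applies), and the sign reduction that places us in the range where the indices $i, i+h, k, k+h$ automatically lie in $\{1, \dots, n\}$. A more substantial question, suggested by the preceding remarks, would be whether a converse holds or how this phenomenon interacts quantitatively with the statistic $n_{J}$, but that is outside the stated proposition.
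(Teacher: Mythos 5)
Your proof is correct and fills in, with full detail, exactly the route the paper merely gestures at---writing out the action of $w$ on $\varepsilon_{i}-\varepsilon_{j}$ so that $\theheight(w\alpha_{ij}) = w(j)-w(i)$ and matching equal-height root pairs against the rows of the Costas difference triangle. Your observation that the hypothesis $\beta \neq \gamma$ must be added (and your sign normalization so the Costas condition applies to positive displacements) is a genuine and necessary sharpening of the statement as printed.
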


\begin{proof}
This can be seen by considering how $W$ acts on $\varepsilon_{i} - \varepsilon_{j}$.  It is interesting to compare this with the diagrams \cite[p. 112]{aBfB05b}. 
\end{proof}

\bibliographystyle{amsalpha}
\bibliography{sampart}

\end{document}